\theoremstyle{plain}
\newtheorem{theorem}{Theorem}
\newtheorem{lemma}{Lemma}
\newtheorem{corollary}{Corollary}
\newtheorem{question}{Question}
\theoremstyle{definition}
\newtheorem{remark}{Remark}
\newtheorem{definition}{Definition}
\newcommand{\B}{\mathbb}
\newcommand{\ga}{\alpha}
\newcommand{\eps}{\varepsilon}
\newcommand{\gq}{\vartheta}
\begin{document}

\title{Strong normality and generalised Copeland--Erd\H{o}s numbers}

\author{Elliot Catt}
\address{School of Math.~and Phys.~Sciences\\
University of Newcastle\\
Callaghan\\
Australia}
\email{Elliot.Catt@uon.edu.au}

\author{Michael Coons}
\address{School of Math.~and Phys.~Sciences\\
University of Newcastle\\
Callaghan\\
Australia}
\email{Michael.Coons@newcastle.edu.au}

\author{Jordan Velich}
\address{School of Math.~and Phys.~Sciences\\
University of Newcastle\\
Callaghan\\
Australia}
\email{Jordan.Velich@uon.edu.au}

\thanks{The research of E.~Catt and J.~Velich was supported by CARMA Undergraduate Summer Research Scholarships, and the research of M.~Coons was supported by ARC grant DE140100223.}

\date{\today}

\keywords{Normality, Strong normality, Digital expansions}
\subjclass[2010]{Primary 11K16; Secondary 11A63}%

\begin{abstract} 
We prove that an infinite class of Copeland-Erd\H{o}s numbers are not strongly normal and provide the analogous result for Bugeaud's Mahler-inspired extension of the Copeland-Erd\H{o}s numbers. After the presentation of our results, we offer several open questions concerning normality and strong normality.
\end{abstract}

\maketitle

\section{Introduction}

Up to the year 1909, problems of probability were classified as either `discontinuous' or `continuous' (also called `geometric'). Towards filling this gap, in that year, Borel \cite{B1909} introduced what he called countable probabilities (probabit\'es d\'enombrables). In this new type of problem, one asks probabilistic questions about countable sets. As a (now very common) canonical example, Borel \cite{B1909} considered properties of the frequency of digits in the digital expansions of real numbers. Specifically, he defined the notions of {\em simple normality} and {\em normality}. 

A real number $x$ is called {\em simply normal to the base $b$} (or {\em $b$-simply normal}) if each of $0,1,\ldots,b-1$ occur in the base-$b$ expansion of $x$ with equal frequency $1/b$. This number $x$ is then called {\em normal to the base $b$} (or {\em $b$-normal}) provided it is $b^m$-simply normal for all positive integers $m$.

As part of his seminal work in the area, Borel \cite{B1909} showed that {\em almost all numbers are normal to every positive integer base}, but it was not until 1933 that Champernowne \cite{C1933} gave an explicit example; he showed that the number $$\xi_{\B{N},10}:=0.1234567891011121314151617\cdots,$$ produced by concatenating the digits of the positive integers, is normal to the base $10$. He also claimed that the number $$0.46891012141516182021222425\cdots,$$ produced by concatenating the digits of the composite integers, is normal to the base $10$, though he did not provide a proof. Such a proof was provided by Copeland and Erd\H{o}s \cite{CE1946}, who showed that if $b\geqslant 2$ is an integer and $a_1,a_2,a_3,\ldots$ is an increasing sequence of positive integers such that for every $\gq<1$ the number of $a_n$'s up to $N$ exceeds $N^\gq$ for $N$ sufficiently large, then the number $$\xi_{A,b}:=0.(a_1)_b(a_2)_b(a_3)_b\cdots$$ is $b$-normal, where $(a_n)_b$ denotes the base-$b$ expansion of the integer $a_n$ and $A:=\{a_1,a_2,a_3,\ldots\}.$

In the present day, normality is a property that one presumes a `random' number must have. But, of course, just because a number is normal to some base, does not mean this number is in any way `random'. Champernowne's number $\xi_{\B{N},10}$ is by no means `random'. In order to more formally (and quite simply) differentiate normal from `random', Belshaw and Borwein \cite{BB2013} introduced a new asymptotic test of pseudorandomness based on the law of the iterated logarithm, which they call {\em strong normality}. Specifically, let $\xi\in(0,1)$ and $m_{k,b}(\xi;n)$ denote the number of times that $k$ appears in the first $n$ $b$-ary digits of the base-$b$ expansion of $\xi$. The number $\xi$ is called {\em simply strongly normal in the base $b$} (or {\em $b$-simply strongly normal}) provided both $$\limsup_{n\to\infty} \frac{m_{k,b}(\xi;n)-n/b}{\sqrt{2n\log\log n}}=\frac{\sqrt{b-1}}{b}\quad\mbox{and}\quad\liminf_{n\to\infty} \frac{m_{k,b}(\xi;n)-n/b}{\sqrt{2n\log\log n}}=-\frac{\sqrt{b-1}}{b},$$ for every $k\in\{0,1,\ldots,b-1\}$. This number $\xi$ is then called {\em strongly normal to the base $b$} (or {\em $b$-strongly normal}) if it is $b^m$-simply strongly normal for all positive integers $m$. 

To illustrate the value of their new asymptotic test of pseudorandomness, Belshaw and Borwein \cite{BB2013} showed that a strongly normal (resp.~simply strongly normal) number was also normal (resp.~simply normal). They also showed that Champernowne's number $\xi_{\B{N},2}$, produced by concatenating the positive integers written in binary, is not strongly normal to the base $2$. They commented that this was true for every base $b$, and a slight modification of their proof indeed gives this result. 

In this paper, we show that an infinite class of Copeland-Erd\H{o}s numbers is not strongly normal. In fact, our result addresses Bugeaud's Mahler-inspired generalisation of the Copeland-Erd\H{o}s numbers. Recall that for a real number $x$, $\lfloor x\rfloor$ denotes the greatest integer which is less than or equal to $x$, and $\{x\}$ denotes the fractional part of $x$; that is, $x-\{x\}=\lfloor x\rfloor.$ To ease the exposition, before stating our result, we introduce a piece of notation in the following definition.

\begin{definition}\label{xiAbc} Let $c\geqslant 1$ be a real number, $b\geqslant 2$ be an integer, and $a_1,a_2,a_3,\ldots$ be an increasing sequence of positive integers. For each $n\geqslant 1$, let $(a_n)_b$ denote the base-$b$ expansion of the integer $a_n$. We define the real number $$\xi_{A,b,c}:=0.(a_1)_b\cdots(a_1)_b(a_2)_b\cdots(a_2)_b(a_3)_b\cdots(a_3)_b\cdots,$$ where $A:=\{a_1,a_2,a_3,\ldots\}$ and each block of $b$-ary digits $(a_n)_b$ is repeated $\lfloor c^{\ell_b(a_n)}\rfloor$ times; here $\ell_b(a_n)$ is equal to the length of the integer $a_n$ written in base $b$.
\end{definition}

We prove the following theorem.

\begin{theorem}\label{upAc} Let $c\geqslant 1$ be a real number, $b\geqslant 2$ be an integer, $A\subseteq\B{N}$, $A(x)$ be the number of elements in $A$ that are at most $x$, and $\ga$ be a real number satisfying $$\ga<\left(1-\frac{1}{b}+\frac{1}{b(bc-1)}\right)
\cdot \frac{\log b}{2}.$$ If for large enough $x$ $$A(x)\leqslant \ga\cdot\frac{x}{\log x},$$ then $\xi_{\B{N}\backslash A,b,c}$ is not $b$-simply strongly normal. 
\end{theorem}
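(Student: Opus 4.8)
The plan is to break simple strong normality through a single digit, namely $0$, whose frequency I expect to deviate far too strongly in the negative direction. The guiding observation is structural: in any base $b$ the leading digit of a positive integer is never $0$, so as we concatenate and repeat the integers of $\B{N}\backslash A$ the digit $0$ is systematically under-represented. I would make this quantitative at the natural checkpoints $N_L$, where $N_L$ denotes the number of $b$-ary digits written once every integer of $\B{N}\backslash A$ of length at most $L$ (together with its $\lfloor c^\ell\rfloor$ repetitions) has been recorded, and then show that along this subsequence
\[
\frac{m_{0,b}(\xi_{\B{N}\backslash A,b,c};N_L)-N_L/b}{\sqrt{2N_L\log\log N_L}}\longrightarrow-\infty,
\]
so that the required value $-\tfrac{\sqrt{b-1}}{b}$ of the liminf cannot hold.

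First I would treat the undeleted case $A=\emptyset$. Among all integers of a fixed length $\ell$ the non-leading positions are exactly equidistributed over $\{0,1,\dots,b-1\}$, and this persists after each block is repeated the same number $\lfloor c^\ell\rfloor$ of times. Hence the number of $0$'s equals $\tfrac1b$ of the non-leading positions, and the deviation of the digit $0$ from its expected count $N_L/b$ is \emph{exactly} $-S_L/b$, where $S_L:=\sum_{\ell=1}^{L}(b-1)b^{\ell-1}\lfloor c^\ell\rfloor$ counts the leading positions. A geometric-series estimate gives $S_L\sim\frac{(b-1)c}{bc-1}(bc)^L$ and $N_L\sim\bigl(L-\tfrac{1}{bc-1}\bigr)S_L$, so the deviation is of order $(bc)^L$ while $\sqrt{N_L}$ is only of order $(bc)^{L/2}$; the deviation therefore swamps $\sqrt{2N_L\log\log N_L}$ and the displayed ratio tends to $-\infty$. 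This already recovers the Belshaw--Borwein phenomenon for Champernowne-type concatenations when nothing is removed.

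The substance of the theorem is that deleting the sparse set $A$ cannot repair this deficiency. Deleting an integer $a$ of length $\ell$ together with its $\lfloor c^\ell\rfloor$ copies decreases $N_L$ by $\ell\lfloor c^\ell\rfloor$ and decreases the count of $0$'s by $z(a)\lfloor c^\ell\rfloor$, where $z(a)$ is the number of zero digits of $a$; hence it changes the deviation $-S_L/b$ by exactly $\lfloor c^\ell\rfloor\bigl(\tfrac{\ell}{b}-z(a)\bigr)$. Since $z(a)\ge0$, each deletion can \emph{raise} the deviation by at most $\tfrac{\ell}{b}\lfloor c^\ell\rfloor$, so the total repair is at most $\tfrac1b\sum_{a\in A,\ \ell_b(a)\le L}\ell_b(a)\lfloor c^{\ell_b(a)}\rfloor$. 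I would bound this sum by summation by parts against the hypothesis $A(x)\le\alpha\,x/\log x$, equivalently $A(b^\ell)\le\alpha b^\ell/(\ell\log b)$, producing again a constant multiple of $\alpha(bc)^L$. Comparing the two coefficients of $(bc)^L$---the genuine deficit $S_L/b$ against this worst-case repair---yields a threshold on $\alpha$ of precisely the stated shape $\bigl(1-\tfrac1b+\tfrac{1}{b(bc-1)}\bigr)\tfrac{\log b}{2}$, below which the deficit survives and remains of order $-(bc)^L$.

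The hard part will be exactly this final comparison, for it is where the precise constant is decided: one must carry the lower-order contributions honestly---the $\tfrac{1}{bc-1}$ shift in the average block length, the discrepancy between $\lfloor c^\ell\rfloor$ and $c^\ell$, and the constant emerging from the summation by parts---so that the surviving deficit is demonstrably still of order $-(bc)^L$ rather than being cancelled by the admissible deletions. Once that estimate is in hand, the normalized deviation of the digit $0$ along the checkpoints $N_L$ tends to $-\infty$; its liminf is then not $-\tfrac{\sqrt{b-1}}{b}$, and therefore $\xi_{\B{N}\backslash A,b,c}$ fails to be $b$-simply strongly normal.
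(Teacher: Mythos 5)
Your route is genuinely different from the paper's, and most of it is sound. The paper tracks an \emph{excess} of the digit $1$ at checkpoints placed mid-block, just after the integers $b^{k-1},\dots,2b^{k-1}-1$ (all of whose leading digits are $1$), and needs a one-sided comparison lemma because the deleted sequence and the full sequence are compared at the same digit count. You instead track a \emph{deficit} of the digit $0$ at checkpoints where every surviving integer of length at most $L$ has been written; since your checkpoint moves with the deletions, your deviation identity $m_{0,b}-N_L/b=-S_L/b+\sum_{a}\lfloor c^{\ell_b(a)}\rfloor\bigl(\ell_b(a)/b-z(a)\bigr)$ is exact, which is tidier than the paper's inequality. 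Your asymptotics $S_L\sim\frac{(b-1)c}{bc-1}(bc)^L$ and $N_L\sim\bigl(L-\frac{1}{bc-1}\bigr)S_L$ are correct, and the liminf-violation mechanism (deviation of order $-(bc)^L$ against a normalizer of order $(bc)^{L/2}$, up to factors of $L$) is exactly parallel to the paper's Lemma on sufficiency.

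The gap is at the one step you deferred, and your prediction of its outcome is wrong: the comparison does \emph{not} yield ``precisely the stated shape.'' The deficit coefficient is $\frac{(b-1)c}{b(bc-1)}$, while summation by parts in the paper's form (with weights $a_\ell=\ell c^\ell$) bounds the repair by $\frac{1}{b}(L+1)c^{L+1}A(b^L-1)\leqslant\frac{\alpha c}{b\log b}(bc)^L(1+o(1))$; comparing coefficients gives the threshold $\alpha<\frac{(b-1)\log b}{bc-1}$. That constant is not the stated one, and it is \emph{smaller} than the stated one as soon as $c>2+\frac{1}{b}-\frac{1}{b(b-1)}$; indeed it tends to $0$ as $c\to\infty$ while the stated bound tends to $\bigl(1-\frac{1}{b}\bigr)\frac{\log b}{2}$. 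So, as sketched, your proof does not cover the full range of $\alpha$ permitted by the theorem for such $c$. The repair is simple: drop summation by parts and use the monotone-weight bound $\ell\lfloor c^\ell\rfloor\leqslant Lc^L$ for all $\ell\leqslant L$, so the repair is at most $\frac{1}{b}Lc^LA(b^L-1)\leqslant\frac{\alpha}{b\log b}(bc)^L(1+o(1))$, giving the threshold $\alpha<\frac{(b-1)c\log b}{bc-1}$. Since $2b(b-1)c-\bigl[(b-1)(bc-1)+1\bigr]=bc(b-1)+b-2>0$ for all $b\geqslant2$, $c\geqslant1$, this threshold strictly exceeds the stated one, so the corrected digit-$0$ argument proves the theorem and in fact a stronger statement. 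In short: the idea is viable and even over-performs, but the decisive computation was neither done nor correctly anticipated, and with the summation-by-parts bound you actually proposed, the argument fails for $c$ larger than roughly $2$.
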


Theorem \ref{upAc} contains the result of Belshaw and Borwein \cite{BB2013} that each Champernowne number $\xi_{\B{N},b}=\xi_{\B{N},b,1}$ is not $b$-strongly normal. It also yields the following corollary.

\begin{corollary} The number $$0.46891012141516182021222425\cdots,$$ produced by concatenating the digits of the composite integers, is not strongly normal to the base $10$.
\end{corollary}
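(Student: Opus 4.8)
The plan is to show that $\xi:=\xi_{\B{N}\backslash A,b,c}$ fails the $\liminf$ half of simple strong normality for a single digit, and I would take $k=0$; concretely I would prove
$$\liminf_{n\to\infty}\frac{m_{0,b}(\xi;n)-n/b}{\sqrt{2n\log\log n}}=-\infty,$$
which already contradicts the required value $-\sqrt{b-1}/b$. The mechanism is the elementary but decisive fact that a base-$b$ expansion of an integer never has leading digit $0$, so each block of integers of a fixed length contributes fewer zeros than the unbiased mean $1/b$ predicts. Among the $(b-1)b^{\ell-1}$ integers of length $\ell$ the digit $0$ occurs $(\ell-1)(b-1)b^{\ell-2}$ times whereas the mean predicts $\ell(b-1)b^{\ell-2}$, so a single copy of the length-$\ell$ block carries a zero-deficit of exactly $(b-1)b^{\ell-2}$; since in $\xi$ each length-$\ell$ integer is written $\lfloor c^{\ell}\rfloor$ times, the block contributes a zero-deficit of $(b-1)b^{\ell-2}\lfloor c^\ell\rfloor$.

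Since the $\liminf$ is bounded above by any subsequential limit, it suffices to exhibit a subsequence along which the statistic tends to $-\infty$, and I would use the natural breakpoints $n=N_L$, where $N_L$ is the number of digits written once all admissible integers of length at most $L$ have been recorded. Writing $D_L:=N_L/b-m_{0,b}(\xi;N_L)$ and summing the per-block deficits over $\ell\le L$, a geometric sum dominated by its top term (because $bc>1$) gives $D_L\asymp (bc)^L$, while the analogous weighted sum for the total length gives $N_L\asymp L\,(bc)^L$. Hence $\sqrt{2N_L\log\log N_L}\asymp (bc)^{L/2}\sqrt{L\log L}$, and as the numerator $D_L$ grows like $(bc)^{L}$ the normalized deviation at $n=N_L$ tends to $-\infty$. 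For $A=\varnothing$ this already recovers the Belshaw--Borwein conclusion.

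The substantive point is that removing the sparse set $A$ cannot repair this deficit, and this is where the hypothesis on $\ga$ enters and where the main work lies. Writing $D_L(\B{N}\backslash A)=D_L(\B{N})-D_L(A)$, the dangerous case is when $A$ consists of nearly zero-free integers, so that $D_L(A)$ is positive and erodes the deficit; bounding the number of length-$\ell$ elements of $A$ by $A(b^\ell)\le \ga\,b^\ell/(\ell\log b)$ and weighting each by its maximal per-copy deficit $\lfloor c^\ell\rfloor\,\ell/b$ yields an upper estimate for $D_L(A)$ that is again a constant multiple of $(bc)^L$. The inequality on $\ga$ in the hypothesis is precisely the condition forcing the leading coefficient of $D_L(\B{N}\backslash A)$ to stay strictly positive, so that $D_L(\B{N}\backslash A)\asymp (bc)^L$ survives and the previous paragraph applies verbatim with $N_L,D_L$ replaced by their $\B{N}\backslash A$ counterparts (here $N_L(\B{N}\backslash A)\sim N_L(\B{N})$, since the total count carries an extra factor $L$ that the sparse set lacks). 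The delicate part — and the step I expect to be the main obstacle — is making these estimates uniform: handling the floors $\lfloor c^\ell\rfloor$, controlling the tails of the geometric sums, and converting the density bound $A(x)\le\ga\,x/\log x$ into a clean per-length count are exactly what pin down the numerical constant in the admissible range for $\ga$.

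Finally, the corollary follows on taking $b=10$, $c=1$, and $A=\{1\}\cup\{\text{primes}\}$, so that $\B{N}\backslash A$ is the set of composite integers. By the prime number theorem $A(x)\sim x/\log x$, so for every $\ga$ slightly larger than $1$ one has $A(x)\le \ga\,x/\log x$ for all large $x$; since the threshold in Theorem~\ref{upAc} evaluates at $b=10,\,c=1$ to a constant exceeding $1$, such an $\ga$ is admissible and the theorem applies.
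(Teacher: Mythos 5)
Your final paragraph is exactly the paper's (implicit) proof of this corollary: the paper gives no separate argument, merely observing that Theorem~\ref{upAc} ``yields'' the statement, and your specialization --- $b=10$, $c=1$, $A=\{1\}\cup\{\mathrm{primes}\}$, $A(x)\sim x/\log x$ by the prime number theorem, and the threshold $\bigl(1-\tfrac{1}{10}+\tfrac{1}{90}\bigr)\tfrac{\log 10}{2}=\tfrac{41\log 10}{90}\approx 1.049>1$ --- is precisely that check, so on the corollary itself you agree with the paper. Where you genuinely diverge is in the supporting derivation of the underlying theorem. The paper counts the digit $1$ and violates the $\limsup$ condition: it exploits the surplus of leading $1$s among the integers in $[b^{k-1},2b^{k-1}-1]$, evaluates at the fixed digit index $d_{b,c,k}$, and compares $\xi_{\B{N}\backslash A,b,c}$ with $\xi_{\B{N},b,c}$ via the one-sided inequality of Lemma~\ref{uptoDbkc}, which must concede a full digit for every digit of every removed integer (each could have been a $1$). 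You count the digit $0$ and violate the $\liminf$ condition: no integer has a leading zero, you evaluate at adapted full-block breakpoints $N_L$, and your decomposition $D_L(\B{N}\backslash A)=D_L(\B{N})-D_L(A)$ is an exact identity in which a removed zero-free integer of length $\ell$ costs only $\ell/b$ rather than $\ell$. This is genuinely more efficient: carried through, your route needs only $\ga<(1-1/b)\log b$, a strictly weaker hypothesis than the paper's $\ga<\bigl(1-\tfrac1b+\tfrac{1}{b(bc-1)}\bigr)\tfrac{\log b}{2}$ for every pair $(b,c)\neq(2,1)$ (the two coincide at $b=2$, $c=1$). Consequently your remark that the paper's inequality on $\ga$ is ``precisely'' the positivity condition in your computation is slightly off --- the paper's hypothesis is stronger than what your deficit argument requires --- but since it is stronger, no gap results; your sketch would prove a sharper theorem, and in particular suffices for the corollary. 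The details you flag as delicate (the floors $\lfloor c^\ell\rfloor$, geometric tails, and converting $A(x)\leqslant\ga x/\log x$ into per-length counts) are routine and run parallel to the paper's own Lemma~\ref{dbckga}; the only cosmetic omission is the remark, made explicitly in Lemma~\ref{Acbk} of the paper, that failure of $b$-simple strong normality already precludes $b$-strong normality.
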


\begin{remark} Bugeaud, in his monograph, {\em Distribution modulo one and Diophantine approximation} \cite[p.~87, Theorem 4.10]{B2012}, was the first to prove the $b$-normality of the generalised Copeland-Erd\H{o}s numbers (with the multiplicities as given in Definition~\ref{xiAbc}); the special case of the $b$-normality of $\xi_{\B{N},b}$ was proved by Mahler \cite{M1937}.
\end{remark}

\section{Non-strong normality and generalised Copeland-Erd\H{o}s numbers}

In this section, we prove Theorem \ref{upAc}. Our proof is inspired by Belshaw and Borwein's proof \cite{BB2013} of the non $b$-strong normality of the Chapernowne numbers $\xi_{\B{N},b,1}$. The main idea is to define an increasing sequence (in the index $k$) of positive integers $d_{b,c,k}$, and consider the number of $1$s in the number $\xi_{\B{N}\backslash A,b,c}$ up to the $d_{b,c,k}$-th $b$-ary digit. We will show that there are an excess of $1$s. 

To this end, we start with some properties about the special sequence over which we will eventually take limits.

\begin{lemma}\label{dbckga} Let $c\geqslant 1$ be a real number. For the integers $b\geqslant 2$ and $k\geqslant 1$, let $d_{b,c,k}$ denote the number of $b$-ary digits in the non-decreasing concatenation of the first $2b^{k-1}-1$ positive integers written in the base $b$, where each integer of is repeated $\lfloor c^{\ell_b(a_n)}\rfloor$ times. Then $$d_{b,c,k}=\lfloor c^{k}\rfloor k b^{k-1}+\sum_{n=1}^{k-1}\lfloor c^{n}\rfloor n b^{n-1}(b-1).$$ Moreover, we have $$d_{b,c,k}=\frac{cb+b-2}{b(cb-1)}\cdot k(cb)^{k}\cdot\left(1+O\left(\frac{1}{k}\right)\right)
.$$
\end{lemma}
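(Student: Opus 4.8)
The plan is to establish the two claims in Lemma~\ref{dbckga} separately: first the exact closed form for $d_{b,c,k}$, then the asymptotic estimate. For the exact formula, I would count digits block by block. The integers with exactly $n$ base-$b$ digits are those in the range $[b^{n-1}, b^n - 1]$, of which there are $b^{n-1}(b-1)$ for each $n\geqslant 1$; each such integer contributes $n$ digits per occurrence and is repeated $\lfloor c^n\rfloor$ times in the concatenation (since $\ell_b(a_n)=n$ makes the multiplicity $\lfloor c^n\rfloor$). The key combinatorial observation is that the threshold $2b^{k-1}-1$ is chosen precisely so that the integers $1,\ldots,2b^{k-1}-1$ consist of \emph{all} integers of length $1$ through $k-1$ together with exactly the first $b^{k-1}$ integers of length $k$ (namely $b^{k-1},\ldots,2b^{k-1}-1$). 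I would verify this count: integers of length at most $k-1$ number $b^{k-1}-1$, and adding $b^{k-1}$ length-$k$ integers gives $2b^{k-1}-1$ in total, as required. Summing the digit contributions then gives the length-$k$ block contributing $\lfloor c^k\rfloor\cdot k\cdot b^{k-1}$ and each shorter block of length $n$ contributing $\lfloor c^n\rfloor\cdot n\cdot b^{n-1}(b-1)$, yielding exactly the stated formula.

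For the asymptotic estimate, I would factor out the dominant growth. Writing $\lfloor c^n\rfloor = c^n + O(1)$ and ignoring for the moment the floor corrections, the sum $\sum_{n=1}^{k-1} c^n n b^{n-1}(b-1)$ is $\frac{b-1}{b}\sum_{n=1}^{k-1} n (cb)^n$. The standard identity $\sum_{n=1}^{N} n x^n = \frac{x}{(1-x)^2}\bigl(1-(N+1)x^N + Nx^{N+1}\bigr)$ with $x=cb$ shows that this sum is dominated by its top term, of order $k(cb)^k$. The plan is to extract the coefficient of the leading term $k(cb)^k$ from both the length-$k$ piece and the geometric-type sum, combine them over the common denominator $b(cb-1)$, and confirm the leading constant equals $\frac{cb+b-2}{b(cb-1)}$. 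I would then bundle all lower-order contributions — the $Nx^{N+1}$ and constant terms in the summation identity, and the $O(1)$ floor errors which sum to $O(b^k)=o(k(cb)^k)$ — into the $\bigl(1+O(1/k)\bigr)$ factor.

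I expect the main obstacle to be bookkeeping the relative sizes of the error terms rather than any conceptual difficulty. The floor functions $\lfloor c^n\rfloor$ introduce errors of size at most $1$ per term, so across the shorter blocks they contribute at most $O\bigl(\sum_{n=1}^{k-1} n b^{n-1}(b-1)\bigr)=O(kb^k)$, and from the length-$k$ block an error of $O(kb^{k-1})$; I must check that $kb^k$ is genuinely smaller than $k(cb)^k$ by the factor $b^{-k}\cdot$(something), i.e.\ that it is absorbed into the $O(1/k)$ term. Since $(cb)^k/b^k = c^k\geqslant 1$, the floor error is at worst comparable to $k b^k = k(cb)^k c^{-k}$, which when $c>1$ is exponentially smaller and easily $o\bigl(k(cb)^k/k\bigr)$, but the boundary case $c=1$ requires slight care: there $\lfloor c^n\rfloor=1$ exactly so the floors introduce no error at all, and the estimate reduces to the pure sum $\sum n b^n$. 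I would therefore treat the claim as asserting the leading asymptotic with a relative error $O(1/k)$, verify it is uniform in the allowed range of $c$, and confirm that the subleading terms from the summation identity — not the floor errors — are what actually dictate the $O(1/k)$ rate.
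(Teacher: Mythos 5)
Your counting argument for the exact formula is correct, and it is essentially what the paper leaves as ``immediate'': the integers of length $n$ are the $b^{n-1}(b-1)$ elements of $[b^{n-1},b^n-1]$, the truncation point $2b^{k-1}-1$ captures all of these for $n\leqslant k-1$ plus exactly $b^{k-1}$ integers of length $k$, and weighting each block by $n\lfloor c^n\rfloor$ gives the stated sum. Your handling of the floor errors is also sound, including the key observation that $c=1$ needs separate treatment because there an error of size $O(kb^k)$ would be of the same order as the main term; the paper encodes exactly this with its factor $\delta_c$.

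The gap is in your treatment of the summation identity. You assert that $\sum_{n=1}^{k-1}n(cb)^n$ is ``dominated by its top term'' and that the remaining terms of the identity can be bundled into the $\bigl(1+O(1/k)\bigr)$ factor; this is false for the middle term. With $x=cb$ and $N=k-1$ the identity reads $\sum_{n=1}^{k-1}n(cb)^n=\frac{cb}{(cb-1)^2}\bigl(1-k(cb)^{k-1}+(k-1)(cb)^k\bigr)$, and $k(cb)^{k-1}=\frac{1}{cb}\cdot k(cb)^k$ is of the \emph{same} order as the top term $(k-1)(cb)^k$: it is a constant fraction $\frac{1}{cb}$ of the main term, not $O(1/k)$ of it. Retaining it multiplies the leading coefficient by $\bigl(1-\frac{1}{cb}\bigr)$, which turns $\frac{cb}{(cb-1)^2}$ into $\frac{1}{cb-1}$, and only then does the final constant come out as $\frac{1}{b}+\frac{b-1}{b(cb-1)}=\frac{cb+b-2}{b(cb-1)}$. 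If you discard it as your plan states, you instead get $\frac{1}{b}+\frac{c(b-1)}{(cb-1)^2}$, which is wrong: for $b=c=2$ it gives $\frac{13}{18}$ rather than the correct $\frac{2}{3}$. (Note also the internal inconsistency: you name $Nx^{N+1}$ both as the dominant term and among the ``lower-order contributions'' to be absorbed.) The paper's proof keeps exactly this term --- it is the source of the factor $\bigl(1-\frac{1}{cb}-\frac{1}{k}+\frac{1}{k(cb)^k}\bigr)$ in its computation. Everything else in your plan coincides with the paper's argument (closed form for $\sum nx^n$ via differentiating the geometric series, floor errors separated off, the $c=1$ case isolated), so repairing this one bookkeeping step yields a complete proof.
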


\begin{proof} The first assertion of the lemma is immediate. 

For the second, note that $\lfloor c^n\rfloor=c^n-\{c^n\}$. Thus $$d_{b,c,k}= c^{k} k b^{k-1}+\sum_{n=1}^{k-1} c^{n} n b^{n-1}(b-1)-\left(\{ c^{k}\} k b^{k-1}+\sum_{n=1}^{k-1}\{ c^{n}\} n b^{n-1}(b-1)\right).$$ 
Now 
\begin{align*}c(b-1)\sum_{n=1}^{k-1}n(cb)^{n-1}&=c(b-1)\cdot \frac{d}{dx} \left.\left\{\sum_{n=0}^{k-1}x^{n}\right\}\right|_{x=cb}\\
&=c(b-1)\cdot \frac{d}{dx} \left.\left\{\frac{x^{k}-1}{x-1}\right\}\right|_{x=cb}\\
& =c(b-1)\frac{k(cb)^{k-1}(cb-1)-((cb)^{k}-1)}{(cb-1)^2} \\
& =c(b-1)\frac{k(cb)^{k}-k(cb)^{k-1}-(cb)^{k}+1}{(cb-1)^2} \\
& =\frac{c(b-1)}{(cb-1)^2}\cdot k(cb)^{k}\cdot\left(1-\frac{1}{cb}-\frac{1}{k}+\frac{1}{k(cb)^k}\right)\\
& =\frac{(b-1)}{b(cb-1)}\cdot k(cb)^{k}\cdot\left(1+O\left(\frac{1}{k}\right)\right),
\end{align*} so also, $$\{ c^{k}\} k b^{k-1}+\sum_{n=1}^{k-1}\{ c^{n}\} n b^{n-1}(b-1)=\delta_cO(kb^k),$$ where $\delta_c=0$ if $c\in\B{N}$ and $1$ otherwise; this accounts for the fact that this term does not appear if $c=1.$

Hence \begin{align*}d_{b,c,k}&=\frac{1}{b}\cdot k(cb)^{k}+\frac{(b-1)}{b(cb-1)}\cdot k(cb)^{k}\cdot\left(1+O\left(\frac{1}{k}\right)\right)+\delta_cO(kb^k)\\
&=\frac{cb+b-2}{b(cb-1)}\cdot k(cb)^{k}\cdot\left(1+O\left(\frac{1}{k}\right)\right).\qedhere
\end{align*}
\end{proof}

As our proof will use the comparison of $m_{1,b}(\xi_{\B{N}\backslash A,b,c};d_{b,c,k})$ with  $m_{1,b}(\xi_{\B{N},b,c};d_{b,c,k})$, we now provide two lemmas. The first provides a way for us to compare these quantities, and the second gives the value of $m_{1,b}(\xi_{\B{N},b,c};d_{b,c,k})$.

\begin{lemma}\label{uptoDbkc} If $A\subseteq\B{N}$ and $A(x)$ is as defined in Proposition \ref{upAc}, then \begin{align*} m_{1,b}(\xi_{\B{N}\backslash A,b,c};d_{b,c,k}) \geqslant m_{1,b}(\xi_{\B{N},b,c};&d_{b,c,k})-\lfloor c^k\rfloor k(A(2b^{k-1}-1)-A(b^{k-1}-1))\\ &-\sum_{n=1}^{k-1}\lfloor c^n\rfloor n(A(b^{n}-1)-A(b^{n-1}-1)).\end{align*}
\end{lemma}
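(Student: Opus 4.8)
The plan is to count, digit by digit, how many occurrences of the digit $1$ are lost when we pass from concatenating \emph{all} positive integers (producing $\xi_{\B{N},b,c}$) to concatenating only those in $\B{N}\backslash A$ (producing $\xi_{\B{N}\backslash A,b,c}$), and to show this loss is bounded above by the subtracted terms on the right-hand side. Since deleting integers can only remove $1$s and never create new ones, the inequality will follow once we have an upper bound on the number of removed $1$s among the first $d_{b,c,k}$ digits.

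First I would recall, from the definition of $d_{b,c,k}$ in Lemma~\ref{dbckga}, that this count of digits corresponds exactly to the concatenation of the first $2b^{k-1}-1$ positive integers (with each integer $a_n$ repeated $\lfloor c^{\ell_b(a_n)}\rfloor$ times). The integers in this range split naturally by their base-$b$ length: those of length $n$ (for $1\leqslant n\leqslant k-1$) range over $b^{n-1}\leqslant a_n\leqslant b^n-1$, and are each repeated $\lfloor c^n\rfloor$ times, while those of length exactly $k$ that appear are precisely $b^{k-1}\leqslant a_n\leqslant 2b^{k-1}-1$, each repeated $\lfloor c^k\rfloor$ times. I would then bound the number of $1$s contributed by each deleted integer: an integer of base-$b$ length $\ell$ contains at most $\ell$ occurrences of the digit $1$. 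Hence deleting an integer of length $n$ removes at most $n$ ones per copy, and there are $\lfloor c^n\rfloor$ copies, so each deleted length-$n$ integer costs at most $\lfloor c^n\rfloor\cdot n$ ones.

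Next I would count how many integers in $A$ fall into each length class within the relevant range. The number of elements of $A$ of base-$b$ length exactly $n$ (with $n\leqslant k-1$) is $A(b^n-1)-A(b^{n-1}-1)$, and the number of elements of $A$ of length $k$ appearing among the first $2b^{k-1}-1$ integers is $A(2b^{k-1}-1)-A(b^{k-1}-1)$. Multiplying each count by the per-integer bound $\lfloor c^n\rfloor\cdot n$ (respectively $\lfloor c^k\rfloor\cdot k$) and summing gives the total upper bound on the number of $1$s lost, which is exactly the expression subtracted on the right-hand side of the claimed inequality. Since $m_{1,b}(\xi_{\B{N}\backslash A,b,c};d_{b,c,k})$ equals $m_{1,b}(\xi_{\B{N},b,c};d_{b,c,k})$ minus the true number of lost $1$s, and the true loss is at most this bound, the inequality follows.

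The step requiring the most care is the alignment of the digit counts: I must confirm that the first $d_{b,c,k}$ digits of $\xi_{\B{N}\backslash A,b,c}$ and of $\xi_{\B{N},b,c}$ correspond to the same \emph{set} of source integers (those up to $2b^{k-1}-1$), so that the comparison is between matching ranges rather than shifted windows. In principle, deleting integers from $A$ could mean that the first $d_{b,c,k}$ digits of $\xi_{\B{N}\backslash A,b,c}$ reach further into the integer sequence; the inequality is stated as a lower bound precisely to absorb this, since reaching further can only add more (nonnegative) contributions of $1$s. I would therefore be careful to phrase the argument so that we count the $1$s contributed by integers up to $2b^{k-1}-1$ in both expansions and observe that any additional digits in the $\B{N}\backslash A$ expansion only increase its count of $1$s, preserving the direction of the inequality.
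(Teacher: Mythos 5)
Your proposal is correct and follows essentially the same approach as the paper: bound the number of $1$s lost by assuming every deleted integer of length $n$ contributes the maximal $n$ ones per copy, counted length-class by length-class via $A(b^n-1)-A(b^{n-1}-1)$ and $A(2b^{k-1}-1)-A(b^{k-1}-1)$. In fact you are more careful than the paper's very terse proof, since you explicitly handle the window-alignment point — that the first $d_{b,c,k}$ digits of $\xi_{\B{N}\backslash A,b,c}$ reach further into the integer sequence and these extra digits can only increase the count of $1$s — which the paper leaves implicit.
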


\begin{proof} The number $d_{b,c,k}$ was defined to be the number of $b$-ary digits in the concatenation of the first $2b^{k-1}-1$ positive integers written in the base $b$, where each integer of is repeated $\lfloor c^{\ell_b(a_n)}\rfloor$ times. For $n=1,\ldots, k-1$, there are exactly $A(b^{n}-1)-A(b^{n-1}-1)$ integers in $A$ of length $n$ and there are a total of $A(2b^{k-1}-1)-A(b^{k-1}-1)$ of length $k$. The maximal contribution these numbers could have is if each of the numbers in $A$ had all of it's $b$-ary digits equal to $1$. The inequality in the lemma follows immediately from this observation.
\end{proof}

\begin{lemma}\label{Dchamp} Let $d_{b,c,k}$ be as defined in Lemma \ref{dbckga}. Then for large enough $k$ $$m_{1,b}(\xi_{\B{N},b,c};d_{b,c,k})-\frac{d_{b,c,k}}{b}=(cb)^k\left(\frac{b-1}{b^2}+\frac{1}{b^2(cb-1)}+\delta_c O\left(\frac{1}{c^k}\right)\right),$$ where $\delta_c=0$ if $c\in\B{N}$ and $1$ otherwise.
\end{lemma}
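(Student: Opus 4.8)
The plan is to count the number of $1$s among the first $d_{b,c,k}$ base-$b$ digits of $\xi_{\B{N},b,c}$ directly, exploiting the fact that no digits are deleted from the full concatenation and that $d_{b,c,k}$ was defined so that this prefix consists of exactly the blocks for $1,2,\ldots,2b^{k-1}-1$, each with its multiplicity. The crucial structural point is that stopping at $2b^{k-1}-1$ includes every positive integer of length at most $k-1$, together with exactly those integers of length $k$ whose leading base-$b$ digit is $1$, namely $b^{k-1},\ldots,2b^{k-1}-1$. It is this forced leading digit $1$ that produces the excess of $1$s, and recognising it is the conceptual crux of the whole argument.

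First I would compute, for each fixed length, the exact number of $1$s. Among all integers of length $n$ the leading digit ranges uniformly over $\{1,\ldots,b-1\}$, so $1$ occurs $b^{n-1}$ times in the leading position, while each of the $n-1$ trailing positions is uniform over $\{0,\ldots,b-1\}$ and contributes $(b-1)b^{n-2}$ occurrences of $1$; hence a length-$n$ block contributes $b^{n-1}+(n-1)(b-1)b^{n-2}$ ones before multiplicity. For the truncated length-$k$ range $b^{k-1},\ldots,2b^{k-1}-1$ all $b^{k-1}$ leading digits equal $1$ and the $k-1$ trailing positions are again uniform, giving $b^{k-1}+(k-1)b^{k-2}$ ones. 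Weighting by the multiplicities $\lfloor c^n\rfloor$ yields
\[
m_{1,b}(\xi_{\B{N},b,c};d_{b,c,k})=\lfloor c^k\rfloor\bigl(b^{k-1}+(k-1)b^{k-2}\bigr)+\sum_{n=1}^{k-1}\lfloor c^n\rfloor\bigl(b^{n-1}+(n-1)(b-1)b^{n-2}\bigr).
\]

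Next I would subtract $d_{b,c,k}/b$, reading the value of $d_{b,c,k}$ off the first display of Lemma~\ref{dbckga}. The trailing digits were counted as if uniform, so term by term they contribute exactly their fair share $1/b$ and cancel against $d_{b,c,k}/b$; what survives is precisely the leading-digit excess. Concretely the length-$n$ contribution collapses via $b^{n-1}+(n-1)(b-1)b^{n-2}-n(b-1)b^{n-2}=b^{n-2}$ and the length-$k$ contribution via $b^{k-1}+(k-1)b^{k-2}-kb^{k-2}=(b-1)b^{k-2}$, leaving
\[
m_{1,b}(\xi_{\B{N},b,c};d_{b,c,k})-\frac{d_{b,c,k}}{b}=\lfloor c^k\rfloor(b-1)b^{k-2}+\sum_{n=1}^{k-1}\lfloor c^n\rfloor b^{n-2}.
\]

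Finally I would pass to asymptotics. Writing $\lfloor c^n\rfloor=c^n-\{c^n\}$ and summing the finite geometric series $\sum_{n=1}^{k-1}(cb)^n$ isolates the main term $(cb)^k\bigl(\frac{b-1}{b^2}+\frac{1}{b^2(cb-1)}\bigr)$. The remaining pieces are the fractional-part contributions $\{c^k\}(b-1)b^{k-2}$ and $\sum_{n=1}^{k-1}\{c^n\}b^{n-2}$, together with the additive constant produced by truncating the geometric series; the fractional-part sums vanish identically when $c\in\B{N}$ and are otherwise $O(b^k)$, that is, of relative size $\delta_c\,O(1/c^k)$ against $(cb)^k$, while the constant is of still smaller order and is absorbed into the error. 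The step demanding the most care is exactly this last bookkeeping: verifying the term-by-term cancellation of the trailing digits against $d_{b,c,k}/b$ and tracking the $\{c^n\}$-terms so that they collect into the single error $\delta_c O(1/c^k)$; by contrast the combinatorial digit count itself is routine once the truncation point $2b^{k-1}-1$ is read as ``all length-$k$ integers with leading digit $1$.''
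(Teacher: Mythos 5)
Your proposal is correct and follows essentially the same route as the paper's own proof: the identical per-length count of leading and non-leading $1$s (with the truncated length-$k$ range $[b^{k-1},2b^{k-1}-1]$ contributing all-leading-$1$s), the same cancellation against $d_{b,c,k}/b$, and the same geometric-series asymptotics with the fractional-part terms collected into $\delta_c O(1/c^k)$. One remark: the additive constant $-c/(b(cb-1))$ that you explicitly identify is silently dropped into the $\delta_c O(1/c^k)$ term in the paper's proof as well, so your treatment matches the original in rigor --- though strictly speaking neither argument can absorb that constant when $c\in\B{N}$, since then $\delta_c=0$ and the stated error term vanishes identically.
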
  

\begin{proof} Recall that the number $d_{b,c,k}$ denotes the number of $b$-ary digits in the non-decreasing concatenation of the first $2b^{k-1}-1$ positive integers written in the base $b$, where each integer of is repeated $\lfloor c^{\ell_b(a_n)}\rfloor$ times. Note also, that in the set of numbers of length $n$ with any single leading $b$-ary digit, the frequency in the non-leading digits of any specific $b$-ary digit is exactly $1/b$. Thus \begin{align*} m_{1,b}(\xi_{\B{N},b,c};d_{b,c,k})&=\sum_{n=1}^{k-1}\lfloor c^n\rfloor \Bigg(\underset{\text{from length $n$ numbers}}{\underbrace{\frac{(n-1)b^{n-1}(b-1)}{b}}_{\text{the number of non-leading $1$s}}}+\underset{\text{from length $n$ numbers}}{\underbrace{b^{n-1}}_{\text{the number of leading $1$s}}}\ \Bigg)\\
+\lfloor & c^k\rfloor \Bigg(\underset{\text{from numbers in $[b^{k-1},2b^{k-1}-1]$}}{\underbrace{\frac{(k-1)b^{k-1}}{b}}_{\text{the number of non-leading $1$s}}}+\underset{\text{from numbers in $[b^{k-1},2b^{k-1}-1]$}}{\underbrace{b^{k-1}}_{\text{the number of leading $1$s}}} \Bigg)\\
&=\frac{\lfloor c^{k}\rfloor k b^{k-1}+\sum_{n=1}^{k-1}\lfloor c^{n}\rfloor n b^{n-1}(b-1)}{b}\\
&\qquad\qquad-\frac{\lfloor c^{k}\rfloor b^{k-1}+\sum_{n=1}^{k-1}\lfloor c^{n}\rfloor b^{n-1}(b-1)}{b}+\sum_{n=1}^{k}\lfloor c^n\rfloor b^{n-1}\end{align*}
Applying the first assertion of Lemma \ref{dbckga}, we have 
\begin{align*}m_{1,b}(\xi_{\B{N},b,c};d_{b,c,k})-\frac{d_{b,c,k}}{b}&=-\frac{\lfloor c^{k}\rfloor b^{k-1}+\sum_{n=1}^{k-1}\lfloor c^{n}\rfloor b^{n-1}(b-1)}{b}+\sum_{n=1}^{k}\lfloor c^n\rfloor b^{n-1}\\
&=\lfloor c^{k}\rfloor b^{k-1}\left(1-\frac{1}{b}\right)+\frac{1}{b}\sum_{n=1}^{k-1}\lfloor c^{n}\rfloor b^{n-1}\\
&=\lfloor c^{k}\rfloor b^{k-1}\left(1-\frac{1}{b}\right)+\frac{c}{b}\sum_{n=1}^{k-1} (cb)^{n-1}+\frac{1}{b}\sum_{n=1}^{k-1} \{c^{n}\}b^{n-1}\\
&=c^{k}b^{k-1}\left(1-\frac{1}{b}\right)+\frac{c}{b}\left(\frac{(cb)^{k-1}-1}{cb-1}\right)+\delta_c O(b^k)\\
&=(cb)^k\left(\frac{b-1}{b^2}\right)+\frac{1}{b^2}\left(\frac{(cb)^{k}}{cb-1}-\frac{cb}{cb-1}\right)+\delta_c O(b^k)\\
&=(cb)^k\left(\frac{b-1}{b^2}+\frac{1}{b^2(cb-1)}+\delta_c O\left(\frac{1}{c^k}\right)\right),
\end{align*} 
which proves the lemma.
\end{proof}

The next lemma provides a sufficient (but not necessary) condition for the non $b$-simple strong normality of the number $\xi_{\B{N}\backslash A,b,c}$. 

\begin{lemma}\label{Acbk}
Let $c\geqslant 1$ be a real number, $A\subseteq \B{N}$, $d_{b,c,k}$ be as defined in Lemma \ref{dbckga}, and suppose that for large enough $k$, we have $$m_{1,b}(\xi_{\B{N}\backslash A,b,c};d_{b,c,k})-\frac{d_{b,c,k}}{b} \geqslant r (cb)^k$$ for some positive constant $r$. Then $\xi_{\B{N}\backslash A,b,c}$ is not $b$-simply strongly normal. In particular, $\xi_{\B{N}\backslash A,b,c}$ is not $b$-strongly normal.
\end{lemma}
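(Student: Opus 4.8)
The plan is to test the definition of $b$-simple strong normality only along the special subsequence of indices $n=d_{b,c,k}$, $k=1,2,3,\ldots$. For the digit $1$, simple strong normality would force
$$\limsup_{n\to\infty}\frac{m_{1,b}(\xi_{\B{N}\backslash A,b,c};n)-n/b}{\sqrt{2n\log\log n}}=\frac{\sqrt{b-1}}{b},$$
a finite quantity. I would instead show that already along $n=d_{b,c,k}$ the normalised excess tends to $+\infty$, so that the $\limsup$ over all $n$ is infinite and cannot equal $\sqrt{b-1}/b$. This single failure is enough to deny $b$-simple strong normality.

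First I would invoke the hypothesis to bound the numerator from below by $r(cb)^k$ at $n=d_{b,c,k}$. Next I would feed the asymptotic from Lemma \ref{dbckga},
$$d_{b,c,k}=\frac{cb+b-2}{b(cb-1)}\cdot k(cb)^{k}\cdot\left(1+O\left(\tfrac{1}{k}\right)\right),$$
into the denominator. Since $\log d_{b,c,k}=k\log(cb)+O(\log k)$, we get $\log\log d_{b,c,k}=\log k+O(1)$, whence
$$\sqrt{2d_{b,c,k}\log\log d_{b,c,k}}=C\,(cb)^{k/2}\sqrt{k\log k}\,\bigl(1+o(1)\bigr)$$
for the explicit constant $C=\sqrt{2(cb+b-2)/(b(cb-1))}>0$. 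Dividing, the normalised excess at $n=d_{b,c,k}$ is at least a constant multiple of $(cb)^{k/2}/\sqrt{k\log k}$.

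Because $c\geqslant 1$ and $b\geqslant 2$ give $cb\geqslant 2>1$, the exponential factor $(cb)^{k/2}$ overwhelms the sub-polynomial $\sqrt{k\log k}$, so this lower bound diverges to $+\infty$ as $k\to\infty$; hence the full $\limsup$ is $+\infty$ and $\xi_{\B{N}\backslash A,b,c}$ is not $b$-simply strongly normal. The main point to get right is the iterated-logarithm bookkeeping, namely confirming $\log\log d_{b,c,k}\sim\log k$ rather than a term large enough to cancel the exponential gain; but once the denominator is pinned down the divergence is immediate, and no estimate on the $\liminf$ is needed. For the final clause, $b$-strong normality is $b^m$-simple strong normality for every $m\geqslant 1$; taking $m=1$ shows that failing $b$-simple strong normality forces failure of $b$-strong normality.
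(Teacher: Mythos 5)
Your proposal is correct and follows essentially the same route as the paper: bound the numerator along the subsequence $n=d_{b,c,k}$ using the hypothesis, control the denominator $\sqrt{2d_{b,c,k}\log\log d_{b,c,k}}$ via the asymptotics of Lemma \ref{dbckga}, and conclude that the normalised excess diverges, so the limit supremum cannot equal the finite value $\sqrt{b-1}/b$. The only difference is cosmetic: you compute the denominator's precise asymptotics $C(cb)^{k/2}\sqrt{k\log k}$, while the paper absorbs the iterated logarithm into a cruder $\left(k(cb)^k\right)^{1/2+\eps}$ bound; both yield the same divergence.
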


\begin{proof} Using the assumptions of the lemma, we have
$$
\frac{m_{1,b}(\xi_{\B{N}\backslash A,b,c};d_{b,c,k})-{d_{b,c,k}}/{b}}{\sqrt{2d_{b,c,k}\log\log d_{b,c,k}}} \geqslant \frac{r (cb)^{k}}{\sqrt{2d_{b,c,k}\log\log d_{b,c,k}}}.$$ Now using the second assertion of Lemma \ref{dbckga}, for any $\eps>0$ and $k$ large enough $k$, $$\sqrt{2d_{b,c,k}\log\log d_{b,c,k}}\leqslant \left(\frac{d_{b,c,k}}{2}\cdot \frac{b(bc-1)}{cb+b-2}\right)^{1/2+\eps}\leqslant\left(k(cb)^{k}\right)^{1/2+\eps}.$$ Thus $$\lim_{k\to\infty}\frac{m_{1,b}(\xi_{\B{N}\backslash A,b,c};d_{b,c,k})-{d_{b,c,k}}/{b}}{\sqrt{2d_{b,c,k}\log\log d_{b,c,k}}}\geqslant \lim_{k\to\infty}\frac{r(cb)^k}{\left(k(cb)^{k}\right)^{1/2+\eps}}=\infty,$$ whence the bound on the limit supremum in the definition of $b$-simple strong normality cannot hold for the number $\xi_{\B{N}\backslash A,b,c}.$
\end{proof}

In addition to the above results, we will need the following classical result, which is an easy exercise for the curious reader.

\begin{lemma}[Abel Summation] If $a_n,b_n\in\B{C}$ for all $n\in\B{N}$, then for all $k\geqslant 1$ $$\sum_{n=1}^k a_n(b_{n+1}-b_n)=a_{k+1}b_{k+1}-a_1b_1-\sum_{n=1}^kb_{n+1}(a_{n+1}-a_n).$$ 
\end{lemma}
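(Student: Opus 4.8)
The plan is to prove this identity — the discrete analogue of integration by parts — by a one-line telescoping argument rather than anything involving the analytic content of the surrounding paper. The whole proof rests on a single elementary product identity, so I would organise everything around isolating that identity first and then summing.

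First I would record the algebraic identity that, for each $n$,
$$a_{n+1}b_{n+1}-a_nb_n = a_n(b_{n+1}-b_n)+b_{n+1}(a_{n+1}-a_n),$$
which one checks immediately by expanding the right-hand side and cancelling the cross term $a_nb_{n+1}$. Rearranging gives
$$a_n(b_{n+1}-b_n) = \bigl(a_{n+1}b_{n+1}-a_nb_n\bigr)-b_{n+1}(a_{n+1}-a_n).$$
Summing this over $n=1,\dots,k$, the first bracketed term on the right telescopes to $a_{k+1}b_{k+1}-a_1b_1$, while the second sum is carried along unchanged, yielding exactly
$$\sum_{n=1}^k a_n(b_{n+1}-b_n)=a_{k+1}b_{k+1}-a_1b_1-\sum_{n=1}^k b_{n+1}(a_{n+1}-a_n),$$
as claimed. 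An equally painless alternative would be induction on $k$: the base case $k=1$ reduces, after cancelling $a_2b_2$, to the tautology $a_1(b_2-b_1)=a_1b_2-a_1b_1$, and the inductive step follows by adding $a_{k+1}(b_{k+2}-b_{k+1})$ to both sides and invoking the same product identity at index $k+1$.

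There is essentially no genuine obstacle here — the statement is, as the text notes, a routine exercise valid over any commutative ring and in particular over $\B{C}$. The only place demanding any care is the bookkeeping of indices at the endpoints of the telescoped sum (making sure the surviving boundary terms are precisely $a_{k+1}b_{k+1}$ and $-a_1b_1$, with no off-by-one error), and I would double-check this by writing out the first and last few summands explicitly before collapsing the telescope.
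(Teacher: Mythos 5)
Your proof is correct: the product identity $a_{n+1}b_{n+1}-a_nb_n = a_n(b_{n+1}-b_n)+b_{n+1}(a_{n+1}-a_n)$ checks out, and summing it over $n=1,\dots,k$ telescopes exactly to the stated identity with the boundary terms $a_{k+1}b_{k+1}$ and $-a_1b_1$ in the right places. The paper itself offers no proof of this lemma --- it is explicitly left ``as an easy exercise for the curious reader'' --- and your telescoping argument is precisely the standard one that the authors had in mind, so there is nothing to reconcile.
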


We make use of Abel summation in the following way.

\begin{corollary}\label{abel} If $a_n,b_n\geqslant 0$ for all $n$ and $\{a_n\}_{n\geqslant 0}$ is nondecreasing, then $$\sum_{n=1}^k a_n(b_{n+1}-b_n)\leqslant a_{k+1}b_{k+1}.$$
\end{corollary}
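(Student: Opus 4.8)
The plan is to derive the inequality as an immediate consequence of the Abel Summation lemma by a sign analysis of the two correction terms. First I would apply the identity
$$\sum_{n=1}^k a_n(b_{n+1}-b_n)=a_{k+1}b_{k+1}-a_1b_1-\sum_{n=1}^kb_{n+1}(a_{n+1}-a_n)$$
verbatim to the sequences $\{a_n\}$ and $\{b_n\}$ given in the hypotheses; no manipulation of this identity is needed beyond quoting it.

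Next I would verify that both subtracted quantities are nonnegative. Since $a_1\geqslant 0$ and $b_1\geqslant 0$ by hypothesis, the product $a_1b_1$ is nonnegative. For the sum, the nondecreasing hypothesis on $\{a_n\}_{n\geqslant 0}$ gives $a_{n+1}-a_n\geqslant 0$ for every $n$, while $b_{n+1}\geqslant 0$ by hypothesis, so each summand $b_{n+1}(a_{n+1}-a_n)$ is nonnegative and hence so is $\sum_{n=1}^k b_{n+1}(a_{n+1}-a_n)$.

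Finally, subtracting these two nonnegative quantities from $a_{k+1}b_{k+1}$ can only decrease the value, so
$$\sum_{n=1}^k a_n(b_{n+1}-b_n)=a_{k+1}b_{k+1}-a_1b_1-\sum_{n=1}^kb_{n+1}(a_{n+1}-a_n)\leqslant a_{k+1}b_{k+1},$$
which is the claimed bound. The only point demanding any care — rather than a genuine obstacle — is matching each hypothesis to the term whose sign it controls: monotonicity of $\{a_n\}$ is exactly what makes the telescoped sum nonnegative, and nonnegativity of the $b_n$ is what lets us discard both $a_1b_1$ and the sum. I expect the entire argument to be two or three lines once the Abel Summation identity is in hand.
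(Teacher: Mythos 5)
Your proposal is correct and is exactly the argument the paper intends: the corollary is stated without proof as an immediate consequence of the Abel Summation lemma, obtained by discarding the two nonnegative terms $a_1b_1$ and $\sum_{n=1}^k b_{n+1}(a_{n+1}-a_n)$, just as you do. Your sign analysis correctly matches each hypothesis (nonnegativity of $a_1,b_1,b_{n+1}$ and monotonicity of $\{a_n\}$) to the term it controls, so there is nothing to add.
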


With all of the above preliminaries finished, we are now able to present the proof of Theorem \ref{upAc}.

\begin{proof}[Proof of Theorem \ref{upAc}] Applying Lemmas \ref{uptoDbkc} and \ref{Dchamp} along with Corollary \ref{abel} and the removal of some positive terms, we have \begin{align*} 
m_{1,b}(\xi_{\B{N}\backslash A,b,c};d_{b,c,k}) 
&\geqslant m_{1,b}(\xi_{\B{N},b,c};d_{b,c,k}) - \sum_{n=1}^{k-1} c^n n(A(b^{n}-1)-A(b^{n-1}-1))\\ 
&\qquad\qquad- c^k k(A(2b^{k-1}-1)-A(b^{k-1}-1))\\
&\geqslant \frac{d_{b,c,k}}{b}+(cb)^k\left(\frac{b-1}{b^2}+\frac{1}{b^2(cb-1)}+\delta_c O\left(\frac{1}{c^k}\right)\right)\\
&\qquad- c^k kA(b^{k-1}-1)- c^k k\left(A(2b^{k-1}-1)-A(b^{k-1}-1)\right).
\end{align*}
Thus $$ m_{1,b}(\xi_{\B{N}\backslash A,b,c};d_{b,c,k})-\frac{d_{b,c,k}}{b}\geqslant \frac{(cb)^k}{b^2}\left(b-1+\frac{1}{cb-1}+\delta_c O\left(\frac{1}{c^k}\right)\right)- c^k kA(2b^{k-1}).$$
Supposing that $A(x)\leqslant \ga x/\log x$, we have \begin{align*} m_{1,b}(\xi_{\B{N}\backslash A,b,c};d_{b,c,k})-\frac{d_{b,c,k}}{b}&\geqslant \frac{(cb)^k}{b^2}\left(b-1+\frac{1}{cb-1}+\delta_c O\left(\frac{1}{c^k}\right)\right)\\
&\qquad\qquad- c^k \ga \frac{k}{k-1}\cdot\frac{2b^{k-1}}{\log b}\left(\frac{1}{1+\frac{\log 2}{(k-1)\log b}}\right)\\
&\geqslant \frac{(cb)^k}{b^2}\left(b-1+\frac{1}{cb-1}- \ga \cdot\frac{2b}{\log b}+O\left(\frac{1}{k}\right)\right).
\end{align*} Applying Lemma \ref{Acbk} to this last inequality, we have for any subset $A\subseteq\B{N}$ such that $$\ga<\left(b-1+\frac{1}{cb-1}\right)\frac{\log b}{2b}
=\left(1-\frac{1}{b}+\frac{1}{b(bc-1)}\right)\cdot \frac{\log b}{2},$$ the number $\xi_{\B{N}\backslash A,b,c}$ is not $b$-simply strongly normal. 
\end{proof}

\section{Some open questions and thoughts for the future}

In this paper, we showed that a large class of Copeland-Erd\H{o}s numbers are not strongly normal. The main thrust of our argument was the comparison of our class with the Champernowne (or generalised Champernowne) number. This argument yielded nice results, but there is a lot left to explore here. Indeed, it would be reasonable to suppose that the decimal formed by the sequence of prime numbers is also not strongly normal to the scale of ten, but of this we have no proof.

The following few avenues and questions may lead to some new understanding of both normality and strong normality as well as their relationship.

First, our argument address $\xi_{\B{N}\backslash A, b,c}$ where $A\subseteq \B{N}$ is a sufficiently thin set. For example, if $A$ is the set of primes our method works, but if $A$ is larger than that in any real asymptotic sense, then our argument fails. In contrast to this, the Copeland-Erd\H{o}s normality result holds for a much larger class of numbers. 

\begin{question} Do there exist strongly normal Copeland-Erd\H{o}s numbers?
\end{question}

Second, Davenport and Erd\H{o}s \cite{DE1952} proved that if $f(x)$ is a (non-constant) polynomial in $x$, such that $f(n)\in\B{N}$ for all $n\in\B{N}$, then the real number $$0.(f(1))_b(f(2))_b(f(3))_b\cdots$$ is normal to the base $b$. Their result was later generalised in many ways by Nakai and Shiokawa \cite{NS1990a,NS1990b,NS1992,NS1997}. This begs the following question.

\begin{question} Let $f(x)$ be a (non-constant) polynomial in $x$, such that $f(n)\in\B{N}$ for all $n\in\B{N}$. Can the number $0.(f(1))_b(f(2))_b(f(3))_b\cdots$ be $b$-strongly normal?
\end{question}

Nakai and Shiokawa \cite{NS1997} showed that if $f(x)$ is a (non-constant) polynomial in $x$, such that $f(n)\in\B{N}$ for all $n\in\B{N}$, then the real number $$\ga(f,b):=0.(f(2))_b(f(3))_b(f(5))_b\cdots (f(p))_b\cdots,$$ where $p$ runs through the prime numbers, is $b$-normal. 

\begin{question} Is the number $\ga(f,b)$ strongly normal for any choice of $f$ and $b$?
\end{question}

Finally, the following question, while not specifically about strong normality, is certainly evident given the existing literature, though it has not explicitly been formulated before.

\begin{question} Let $b\geqslant 2$ be an integer and $a_1,a_2,a_3,\ldots$ be an increasing sequence of positive integers such that the number $0.(a_1)_b(a_2)_b(a_3)_b\cdots$ is $b$-normal. If $f(x)$ is a (non-constant) polynomial in $x$, such that $f(n)\in\B{N}$ for all $n\in\B{N}$, then is the number $0.(f(a_1))_b(f(a_2))_b(f(a_3))_b\cdots$ $b$-normal?
\end{question}

\bibliographystyle{amsplain}
\providecommand{\bysame}{\leavevmode\hbox to3em{\hrulefill}\thinspace}
\providecommand{\MR}{\relax\ifhmode\unskip\space\fi MR }
\providecommand{\MRhref}[2]{%
  \href{http://www.ams.org/mathscinet-getitem?mr=#1}{#2}
}
\providecommand{\href}[2]{#2}


\end{document}